\newtheorem{theorem}{Theorem}[section]
\begin{document}

{\LARGE \bf  
\begin{center}
   A two-timescale model of plankton-oxygen dynamics predicts formation of Oxygen Minimum Zones and global anoxia

  \end{center}
}



	\vspace*{1cm}
	
\centerline{\bf Pranali Roy Chowdhury$^a$, Malay Banerjee$^{a}$, Sergei Petrovskii$^{b,}$\footnote{Corresponding author. Email: sp237@leicester.ac.uk}}

\vspace{0.5cm}
	
\centerline{ $^a$Department of Mathematics and Statistics, Indian Institute of Technology Kanpur, Kanpur, India}
		
\centerline{ $^b$ School of Computing and Mathematical Sciences, University of Leicester, Leicester LE1 7RH, UK}
	
		
\vspace{1cm}

\begin{center}
{\bf Abstract}
\end{center}

Decline of the dissolved oxygen in the ocean is a growing concern, as it may eventually lead to global anoxia, an elevated mortality of marine fauna and even a mass extinction. Deoxygenation of the ocean often results in the formation of Oxygen Minimum Zones (OMZ): large domains where the abundance of oxygen is much lower than that in the surrounding ocean environment. Factors and processes resulting in the OMZ formation remain controversial. We consider a conceptual model of coupled plankton-oxygen dynamics that, apart from the plankton growth and the oxygen production by phytoplankton, also accounts for the difference in the timescales for phyto- and zooplankton (making it a “slow-fast system”) and for the implicit effect of upper trophic levels. The model is investigated using a combination of analytical techniques and numerical simulations. We show that the system does not allow for persistent relaxation oscillations; instead, the blowup of the canard cycle results in plankton extinction and oxygen depletion. For the spatially explicit model, an initial non-uniform perturbation can lead to the formation of an OMZ, which then grows in size and spreads over space. For a sufficiently large timescale separation, the spread of the OMZ can result in global anoxia. 

\vspace{1.0cm}
	
\noindent
{\bf Keywords:} slow-fast dynamics; pattern formation; extinction; ocean anoxia; transients

\vspace{1cm}
\newpage 
	
\section{Introduction}

Plankton is a vital element in the complex marine food webs and biochemical cycles. Phytoplankton is the primary producer standing at the base of the marine food web. As a by-product of the primary production, phytoplankton produces oxygen in the process called photosynthesis. The produced oxygen is then used by marine fauna, e.g.~zooplankton and fish. The level of dissolved oxygen is a crucial indicator of the marine ecosystem health, as its depletion may lead to a mass mortality of aquatic species \cite{Heinze21,Watson17,Wignall96}. 
Furthermore, a considerable part of the oxygen produced in the ocean goes to the atmosphere through the ocean surface. It is estimated that around 50-80\% of atmospheric oxygen originates in the ocean. Phytoplankton therefore plays a critical role in producing and maintaining oxygen levels needed for survival both of aquatic and terrestrial species~\cite{Petsch,Berner}. 

Over the last few decades, the level of the dissolved oxygen in the ocean has shown a trend to decrease \cite{Schmidtko17,Breitburg18,Keeling,Oschlies18,Oschlies17}. This is believed to be a result of the global warming, in particular because warmer water contains less dissolved oxygen \cite{Helm11,Schmidtko17}. Also, the warming leads to a stronger stratification of the upper ocean, which reduces the O$_2$ fluxes through the ocean surface~\cite{Matear00,Bopp02}. 
Apart from the above purely physical mechanisms, there can be more subtle effects of the global warming driven by a biological feedback. An increase in the water temperature may slow down the oxygen production by phytoplankton \cite{Jones77,Li84,Robinson00,Hancke04}, potentially resulting in a regime shift and a global oxygen depletion \cite{Petrovskii17,sekerci15a,Sekerci18}. 
Ocean anoxia is thought to be the factor that can trigger a mass extinction and this has indeed happened several times during the deep past \cite{Wignall96,Erwin14,Song14,Sudakow22}. 
Thus, better understanding of the pathways leading to the global anoxia as well as the identification of possible early signs of the approaching catastrophe are obviously problems of literally vital importance. In turn, it requires a better understanding of the coupled plankton-oxygen dynamics in the ocean and, arguably, mathematical modelling is a powerful research approach to facilitate it. 

Mathematical models of plankton dynamics are abundant in the literature, e.g.~see \cite{Beltrami,Beltrami_Carroll,Brindley96,Steele74,Steele92,Cushing,Truscott94a,Truscott94b}. Earlier modelling studies of the plankton dynamics were mostly concerned with the temporal and spatio-temporal dynamics of the coupled phytoplankton-zooplankton system \cite{Steel78,Steele92,Medvinsky02,Malchow00}, with a particular focus on plankton patchiness and plankton blooming~\cite{Beltrami,Truscott94b,Scheffer00,Scheffer97,Martin03}. 
In \cite{Scheffer91a,Petrovskii00,Petrovskii01TPB,Petrovskii02b,Rinaldi93Eco}, conceptual two- and three-component mathematical models were considered to reveal the role of various internal and external factors and to demonstrate different routes to plankton pattern formation. 
More recently, there has been growing attention to possible links between the dissolved oxygen, plankton dynamics and the climate change \cite{Charlson87,Sarmiento98}, which facilitated further progress in mathematical modelling of marine ecosystems \cite{Hays05}. In particular, it was shown in \cite{Petrovskii17,sekerci15a,Sekerci15b} that sustainable oxygen production by marine phytoplankton can be severely disrupted by a gradual increase in the average water temperature. In turn, this may eventually lead to a global anoxia and it was argued in \cite{Petrovskii21} that the observed decrease in the oxygen stock in the ocean \cite{Schmidtko17,Breitburg18,Battaglia17} and the slow gradual decrease in the amount of atmospheric oxygen \cite{Martin17} that has occurred over the last few decades may be early signs of the approaching catastrophe. 

Remarkably, the amount of dissolved oxygen does not only change with time, it also depends on space. 
The spatial distribution of oxygen in the ocean is distinctly heterogeneous \cite{Ito17,Richardson17}, sometimes resulting in the formation of large stable areas or `patches' where the dissolved oxygen concentration is much lower than the average. Such a patch is referred to as Oxygen Minimum Zone (OMZ) or the dead zone \cite{Breitburg18,Watson17,Stramma08}. They were discovered in different parts of the world ocean, e.g.~in the subsurface waters of the Arabian Sea and in the eastern boundary upwelling regions of the tropical oceans of California, Peru, and Namibia \cite{Morrison99,Stramma08}. The existence of OMZ has a significant effect on the marine species abundance and the aquatic food chains \cite{Diaz}. There is evidence that some zooplankton species may have a capacity to adapt to oxygen-deficient environments \cite{Wishner}. However, a significant drop in the dissolved oxygen level eventually results in the formation of a dead zone, so that the majority of marine life either dies or leaves the area \cite{Diaz,Watson17}. 

Interestingly, over the last several decades the OMZs have been growing in size. In particular, a rapid expansion in OMZs in the eastern Pacific and northern Indian oceans is well documented \cite{Stramma08,Breitburg18}. Because of the OMZ's detrimental effect on the corresponding local marine ecosystem, this is becoming a grave concern for ecology and conservation as well as some industries, e.g.~fishery. 
Moreover, there is some theoretical evidence that the OMZ expansion may be an early warning signal of the approaching global anoxia \cite{Alhassan23}. 
The expansion of OMZ is thought to be caused by various factors, ultimately linking it to the global warming and to the human interference through the perturbation of the ocean's biogeochemical cycles, in particular the CO$_2$ cycle, although the issue as a whole remains controversial \cite{Keeling,Lenton08}. 

In this study, we address the phenomena of the OMZ formation and growth theoretically based on the earlier conceptual modelling approach that considers the variations (in particular, a decrease) in the dissolved oxygen level as a inherent property of the coupled plankton-oxygen dynamics in the ocean, not necessarily an effect of exogenous factors \cite{sekerci15a,Petrovskii17,Sekerci18}.
Our updated mathematical model incorporates two important factors that were overlooked in the earlier studies. One such factor is the nonlinear mortality of zooplankton; it takes into account a combined effect of the zooplankton intraspecific competition, cannibalism, and the zooplankton consumption by its predators from a higher trophic level (e.g.~fish) \cite{Steele92,Scheffer91a}. It is well known that the nonlinear mortality rate can change the system's dynamical properties significantly \cite{Truscott94b,Bazykin98,Chowdhury23a} but its potential effect on the plankton-oxygen dynamics has never been investigated. 

The second factor is the existence of different timescales in the plankton-oxygen dynamics. 
Indeed, it is a common observation that the zooplankton growth rate is usually much lower than that of phytoplankton. Correspondingly, the typical time (timescale) of changes in the phytoplankton density is considerably shorter (sometimes by an order of magnitude) than that of zooplankton. 
In this study, we therefore assume that the production of oxygen and the phytoplankton growth, on the one hand, and the zooplankton response, on the other hand, happen on fast and slow timescales, respectively. The presence of different timescales in a dynamical system can make its properties much more complicated, e.g.~to bring bifurcations, coexistence of multiple attractors, complex oscillations, long transients and pattern formation that would not be there otherwise \cite{Rinaldi92,Kuehn15,Poggiale20,Sadhu21,Chowdhury23b}. It is therefore a very relevant question as to how the existence of multiple timescales can modify the plankton-oxygen dynamics, in particular in the context of the OMZ formation and growth. 

The paper is organised as follows. In the next section, we describe our mathematical model and investigate its basic properties such as the existence and stability of the steady states. In Sections \ref{sec:locvsglob} and \ref{sec:slowfast}, we consider the properties of the nonspatial model to reveal, respectively, the effect of the nonlinear mortality and the different timescales. We identify conditions when the system can undergo a regime shift that may correspond to  catastrophic changes in the real world.
In Section \ref{sec:spatial}, we consider the properties of the spatially explicit system, with a special focus on the dynamical regimes resulting in the pattern formation, in particular those that can be interpreted as the formation and/or expansion of Oxygen Minimum Zones. Finally, in Section \ref{sec:conclus} we summarise and discuss our results.

\section{The non-spatial system}\label{sec:nonspat}

We consider a conceptual mathematical model that explicitly includes only phytoplankton, zooplankton, and dissolved oxygen. Oxygen is produced by phytoplankton in photosynthesis and consumed by both phytoplankton and zooplankton as needed for their metabolism. 
In the zero-dimensional (nonspatial) case, the model is given by the following three equations~\cite{sekerci15a,Petrovskii17}:
\begin{subequations} \label{eq:temporal}
\begin{align}
\frac{dc}{dt} &= \frac{Au}{c+1} - \frac{\delta uc}{c+c_2}- \frac{\nu cv}{c+c_3} - c = F(c,u,v), \label{subeq1a} \\
\frac{du}{dt} &= \left(\frac{B c}{c+c_1} - u \right)u - \frac{u v}{u+h} - \sigma u=G(c,u,v), \label{subeq1b}  \\
\frac{dv}{dt} &= \varepsilon\Big(\frac{\eta c^2}{c^2+{c_4}^2}\frac{uv }{u+h} - \mu_1 v-\mu_2 v^2\Big)=\varepsilon H(c,u,v), \label{subeq1c}  
\end{align}
\end{subequations}
where $c,\,u$ and $v$ denote, respectively, the concentration of dissolved oxygen, phytoplankton, and zooplankton densities in appropriately chosen dimensionless variables~\cite{sekerci15a,Petrovskii17} and $t$ is dimensionless time. The first term in Eq.~\eqref{subeq1a} describes the rate of oxygen production in photosynthesis (see~\cite{sekerci15a} for more details) and the second and third terms describe the oxygen consumption by phyto- and zooplankton, respectively, which is assumed to be described by the Monod type kinetics. The first term in Eq.~\eqref{subeq1b} describes the phytoplankton multiplication; based on earlier work~\cite{Steele81,Steele92,Franks02}, we consider it as the logistic growth. The second term in Eq.~\eqref{subeq1b} quantifies the phytoplankton grazing by zooplankton~\cite{Franks02}, and the third term describes the phytoplankton natural mortality. In Eq.~\eqref{subeq1c}, the first term in the brackets describes the zooplankton growth (with the food assimilation efficacy being assumed to depend on the level of dissolved oxygen~\cite{sekerci15a}), the second term describes the zooplankton natural mortality and the third (quadratic) term describes a combined effect of the competition and the predation by species from higher trophic levels (not included into the model explicitly)~\cite{Truscott94a}. Here parameter $A$ is the per capita oxygen production rate, $B$ is the per capita phytoplankton growth rate, $\sigma$ and $\mu_1$ are natural mortality rates for phyto- and zooplankton, respectively, $\mu_2$ quantifies the strength of nonlinear zooplankton removal. 
The meaning of other parameters in Eqs.~\eqref{eq:temporal} is straightforward. For more details, including biological justification of all terms in the right-hand side of Eqs.~\eqref{eq:temporal}, see~\cite{sekerci15a,Petrovskii17}. 

For the reasons mentioned in the introduction, we introduce additional parameter $0<\varepsilon<1$ that quantifies the difference in the phyto- and zooplankton characteristic timescales; in most cases below, we will assume $\varepsilon\ll 1$. Note that, compared to the original model proposed in \cite{sekerci15a,Petrovskii17}, Eqs.~\eqref{eq:temporal} include two essentially new elements, i.e.~the quadratic term and a small parameter (cf. Eq.~\eqref{subeq1c}), which makes model \eqref{eq:temporal}somewhat more realistic.

\subsection{Steady States Analysis} 

To explore the dynamics of the temporal model, we study all possible equilibrium points (steady states) of the system \eqref{eq:temporal} and their stability. The system has a total extinction state given by $E_0=(0,0,0).$ To study the dynamics of the system \eqref{eq:temporal} around $E_0$ we linearize around $E_0$ and obtain the Jacobian matrix 
\begin{equation}
  J_{E_0}= \begin{pmatrix}
  -1&A&0\\0&-\sigma&0\\0&0&-\varepsilon\mu_1
    \end{pmatrix}.
\end{equation}
All the eigenvalues of the above matrix $J_{E_0}$ are real and negative. Therefore, the total extinction state $E_0$ is always stable. In \cite{sekerci15a} the authors showed that under some parametric restrictions, the system can have two zooplankton free equilibria. Since the introduction of the quadratic term in \eqref{subeq1c} does not affect the zooplankton free equilibrium states, as the zooplankton free state is the form $(\Bar{c},\Bar{u},0)$ where $\Bar{c}$ is the root of the quartic equation 
    \begin{equation}\label{boundarystate-c}
        \begin{aligned}
         \Bar{c}^4-(\delta(\sigma-B)-(c_1+c_2+1))\Bar{c}^3 &- (A(B-\sigma)+(\delta\sigma-c_2-1)c_1-B\delta+\delta\sigma-c_2)\Bar{c}^2 -\\ &(((B-\sigma)c_2-\sigma c_1)A+\delta\sigma c_1-c_1c_2)\Bar{c} + A\sigma c_1c_2=0,
        \end{aligned}
\end{equation}
 and 
 \begin{equation}\label{boundarystate-u}
 \Bar{u} = \frac{\Bar{c}(B-\sigma)-c_1\sigma}{\Bar{c}+c_1}.
 \end{equation}
 Since the above equation is a fourth-order polynomial, analytical determination of the equilibrium points is nearly impossible. We choose suitable numerical values of the parameters to obtain feasible zooplankton free equilibrium states. Throughout the paper, we fix the parameter values at
\begin{equation}\label{parameters}
 A = 4,\,B=3,\,\sigma = 0.1,\, c_1 = 0.7,\, c_2 = 1,\, c_3 = 1,\, c_4 = 1,\, \eta = 0.7,\, \delta = 1,\, \gamma = 0.01,\, h = 0.1.
\end{equation}
and suitably vary $\mu_1,\,\mu_2,$ and $\varepsilon.$ For all the values of $\mu_1$ and $\mu_2$ and parameters fixed at \eqref{parameters} the two feasible zooplankton free equilibrium points are given by $E_1=(0.0258,0.0067,0)$ and $E_2=(1.712,2.029,0).$ Among the two zooplankton free equilibrium points, one is always a saddle point while the other can be either stable or unstable depending on the parameter values. For our choice of parameter values \eqref{parameters} both $E_1$ and $E_2$ are unstable (saddle) with a 2-dimension stable manifold and a 1-dimensional unstable manifold. We plot the $c$-component of $E_1$ and $E_2$ in Fig.~\ref{fig:coexistence_eq} with red broken lines. Note the lines are horizontal, because the corresponding steady state values do not depend on $\mu_1$ or $\mu_2$, as is obvious from Eqs.~(\ref{boundarystate-c}-\ref{boundarystate-u}). The system does not possess any other feasible boundary equilibria in this parametric regime. Note that a nontrivial oxygen free equilibrium is not possible in our model (which agrees with biological reasons). Indeed, setting $c\equiv 0$ in Eq.~(1a) immediately leads to $u\equiv 0$, which in turn leads to $z\equiv 0$.

The coexistence equilibrium of the system \eqref{eq:temporal} is denoted by $E_*=(c_*,u_*,v_*)$ where $$v_*=\frac{1}{\mu_2}\Big(\frac{\eta c_*^2}{c_*^2+c_4^2}\frac{u_*}{u_*+h}-\mu_1\Big)$$ and $c_*,\,u_*$ can be obtained by simultaneously solving the quartic and quadratic equations respectively

\begin{eqnarray*}
 c_*^4 + (\delta u_* + \nu v_*+ c_2 + c_3+1)c_*^3 - ((A-\delta(1+c_3))u_* - (1+c_2)\nu v_* - (c_2(1+c_3)+c_3))c_*^2 \\
   - (Au_*(c_2+c_3)-(\delta u_*c_3+\nu v_*c_2+c_2c_3))c - Au_*c_2c_3 = 0,\\
   u_*^2(c_*+c_1) - ((B-h-\sigma)c_*-(h+\sigma)c_1)u_* - (Bh-h\sigma-v_*)c_* + (h\sigma+v_*)c_1 = 0.
\end{eqnarray*}
\noindent 
Because of the complexity of the simultaneous algebraic equations we obtain the coexistence equilibrium points using numerical simulations. The parameter values are fixed at \eqref{parameters} and we choose $\mu_1,\ \mu_2$ as the bifurcation parameters. Note that for $\mu_1>0,\,\mu_2=0,$ the model was rigorously studied in \cite{sekerci15a}. However, we deal with other cases in this work. For $\mu_1=0,\,\mu_2>0,$ we found that the system \eqref{eq:temporal} has two coexistence equilibrium points which disappears via saddle-node bifurcation for sufficiently smaller values of $\mu_2$. If we denote the saddle-node bifurcation threshold by $\mu_2^{\mathrm{SL}},$ then for $\mu_2>\mu_2^{\mathrm{SL}}$ the system has two coexistence equilibrium points and for  $\mu_2<\mu_2^{\mathrm{SL}}$ there are no feasible coexistence equilibrium. For $\mu_1,\,\mu_2>0,$ the system has a unique coexistence equilibrium throughout the parametric regime of $\mu_2,$ whenever they are feasible. 
The number of coexistence equilibrium points and their stability is determined by the combination of the parameters $\mu_1$ and $\mu_2.$ The Jacobian matrix evaluated at the coexistence equilibrium $E_*$ is 
\begin{equation}\label{eq:Jacobi}
    J_{E_*}=\begin{pmatrix}
    J_{11}&J_{12}&J_{13}\\ J_{21}&J_{22}&J_{23}\\J_{31}&J_{32}&J_{33}\\
    \end{pmatrix}
\end{equation}
and the corresponding characteristic equation is given by
$$\lambda^3+p_2\lambda^2+p_1\lambda+p_0=0.$$
The coefficient are $$p_2=-\mathrm{tr}(J_{E_*}),\,\,p_1=J_{11}^{[1]}+J_{22}^{[2]}+J_{33}^{[3]},\,\,p_0=-\det(J_{E_*})$$ where $J_{ij}$ is the $(i,j)$-th entry of  $J_{E_*},$ and $J_{ii}^{[i]}$ is the cofactor of $J_{ii}.$  By Routh-Hurwitz conditions the coexistence equilibrium point $E_*$ is stable if the following conditions hold $$p_0>0,\,p_2>0\,\,\text{and}\,\,p_1p_2>p_0.$$
To observe the change in system's dynamics with the introduction of intraspecific competition among the zooplankton, we choose $\mu_2$ to be the bifurcation parameter. Thus keeping all the parameters fixed, $p_1,\,p_2,\,p_0$ are functions of $\mu_2$ only. The coexistence equilibrium $E_*$ loses its stability and exhibits oscillatory dynamics through Hopf bifurcation at $\mu_2^{\mathrm{H}},$ which is obtained by solving  $$p_1(\mu_2^{\mathrm{H}})p_2(\mu_2^{\mathrm{H}}) = p_0(\mu_2^{\mathrm{H}}).$$ 
This expression is not mathematically tractable, thus we numerically obtain the Hopf bifurcation threshold $\mu_2^{\mathrm{H}}= 0.35405$ (upto five decimal place) for the parameter values \eqref{parameters}, and $\mu_1=0.05,\,\varepsilon=1.$ The Hopf bifurcation can be either supercritical or subcritical depending on the magnitude of the parameters $\mu_1$ and $\mu_2.$ For a fixed $\mu_1,$ with an increase in the intraspecific competition (as quantified by $\mu_2$), \textcolor{black}{the equilibrium state of the system} changes its stability from unstable to stable. 
\begin{figure}[ht!]
    \centering
   \includegraphics[scale=0.5]{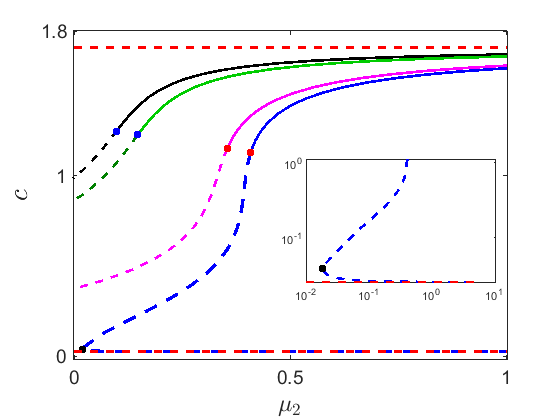}
 \caption{\textcolor{black}{The plot of $c$-component of the coexistence equilibrium for the parameter values given in \eqref{parameters} as a function of parameter $\mu_2$ for a few different values of $\mu_1$:  $\mu_1=0$ (blue), $\mu_1=0.05$ (magenta), $\mu_1=0.25$ (green), and $\mu_1=0.3$ (black). The black dot represents the saddle-node bifurcation threshold $(c^{\mathrm{SL}}, \mu_2^{\mathrm{SL}}).$ The inset shows the nature of the equilibrium branch near the saddle-node bifurcation threshold in log-log scale. The red horizontal lines represent the $c$-component of the two zooplankton-free equilibrium points.  The red dots on the equilibrium branches represent subcritical Hopf bifurcation threshold, while the blue dots on the equilibrium branches represents supercritical Hopf bifurcation threshold. } }    \label{fig:coexistence_eq}
\end{figure}    
The broken line in Fig.~\ref{fig:coexistence_eq} represents the unstable branch of the equilibrium and the continuous line represents the stable branch. \textcolor{black}{The red and blue dots on the equilibrium branch are the subcritical and supercritical Hopf bifurcation thresholds, respectively.} For a relatively higher value of $\mu_2,$ both the extinction state $E_0$ and the coexistence state $E_*$ is stable, and the system exhibits bi-stability. For $\mu_1=0$ the lower branch of the coexistence equilibrium is saddle and it acts as a separatrix between the basin of attraction of $E_*$ and $E_0.$ However, for $\mu_1>0,$ their basins are separated by the unstable manifold of the saddle boundary equilibrium point. For a fixed $\mu_2$ we observe an increase in the oxygen concentration with an increasing mortality rate of the zooplankton (see Fig.\ref{fig:coexistence_eq}). Therefore, an increase in the zooplankton linear mortality rate (as quantified by $\mu_1$) and an increase in the nonlinear mortality rate (due to the intraspecific competition and/or predation by higher trophic levels \cite{SteeleHenderson92}, as quantified by $\mu_2$) lead to an increase in the stable oxygen level.

\section{Local and global dynamics}\label{sec:locvsglob}

The dynamics of the system \eqref{eq:temporal} is determined by its local and global bifurcation structure. However, any comprehensive analytical analysis of the bifurcations is hardly possible for this model due to its algebraic complexity. Instead, we choose $\mu_1$ and $\mu_2$ as bifurcation parameters and obtain two corresponding one parametric bifurcation diagrams; see Fig.~\ref{fig:local_dynamics_temporal}. These diagrams readily reveal the various Hopf bifurcation scenarios of the coexistence equilibrium. We further show the possibility of the heteroclinic bifurcation and saddle-node bifurcation of limit cycles.
\begin{figure}[ht!]
    \centering
    \subfigure[$\mu_2=0$]{\includegraphics[scale=0.35]{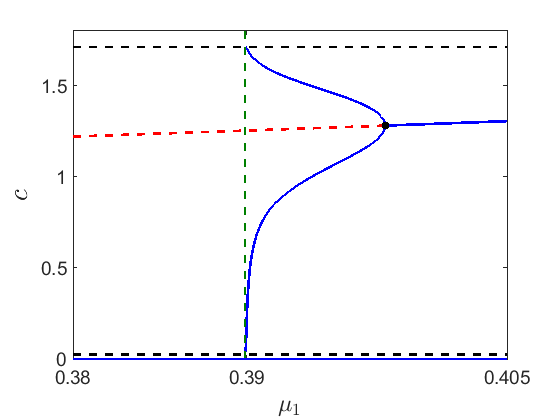}}
     \subfigure[$\mu_1=0$]{\includegraphics[scale=0.35]{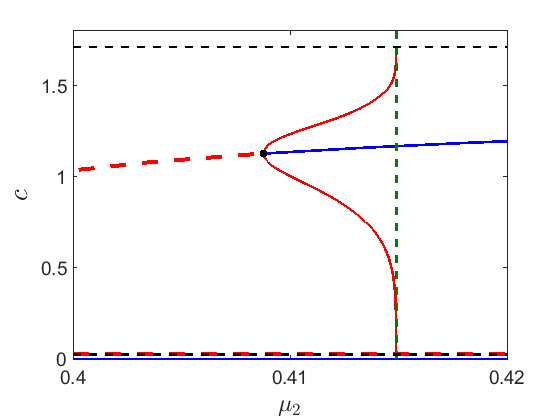}}
     \subfigure[$\mu_1=0.24$]{\includegraphics[scale=0.35]{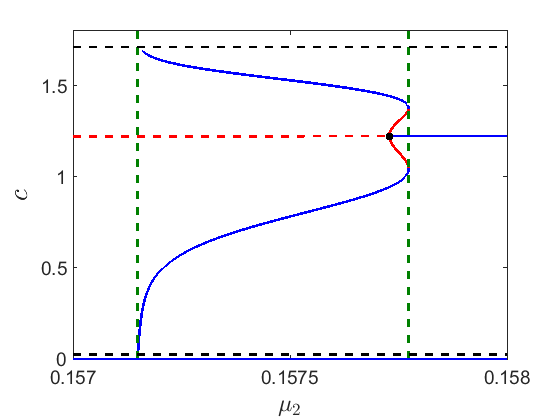}}
    \caption{Bifurcation diagram of the system \eqref{eq:temporal} at the parameter values \eqref{parameters} for different combination of $\mu_1$ and $\mu_2.$ The stable equilibrium and limit cycles are marked in blue. The unstable cycle and coexistence equilibrium are marked in red (continuous and broken respectively). The $c$-component of the boundary equilibrium points is shown by broken black lines. The global bifurcation thresholds are marked in vertical broken green line. The black dot represents the Hopf bifurcation threshold (subcritical and supercritical).}    \label{fig:local_dynamics_temporal}
\end{figure}

In Figure \ref{fig:local_dynamics_temporal}, 
the Hopf bifurcation thresholds (supercritical and subcritical) are marked by black dots and the thresholds for the global bifurcations (heteroclinic and saddle-node bifurcation of limit cycles) are marked by green vertical broken lines. The stable and unstable cycles (and equilibrium) are shown in blue and red colour, respectively. We choose $\mu_1$ or $\mu_2$ as bifurcation parameters to observe the change in the system's dynamics. Note that, the zooplankton free equilibrium points $E_1$ and $E_2$ are independent of the bifurcation parameters. Thus, the stability of these equilibrium points does not depend on $\mu_1$ and $\mu_2.$ For our investigation, we choose other parameters as in \eqref{parameters}, so that that both $E_1$ and $E_2$ are unstable. we plot the $\Bar{c}$-components of the unstable zooplankton free equilibrium points of the form $(\Bar{c},\Bar{u},0)$ in a black broken line. 

In the absence of intra-specific competition among the zooplankton and neglecting the effect of higher trophical levels (that is, for $\mu_2=0$), the system has a unique coexistence equilibrium, which loses its stability through Hopf bifurcation at $\mu_1^{\mathrm{H}}=0.398$ (considering all the other parameters are fixed at \eqref{parameters}). The Hopf bifurcation is supercritical as the first Lyapunov coefficient is $l_1= -0.358 (<0)$. The unique coexistence equilibrium point is stable for $\mu_1>\mu_1^{\mathrm{H}}$ and unstable for $\mu_1<\mu_1^{\mathrm{H}}.$ A small amplitude stable limit cycle originates from $\mu_1^{\mathrm{H}}.$ The amplitude of the stable cycle increases with decreasing $\mu_1$ till it hits the boundary equilibrium points and disappears via a heteroclinic bifurcation (marked in green). Beyond this threshold, the coexistence equilibrium is unstable and the extinction state $E_0$ is the only global attractor. On decreasing the mortality rate from $\mu_1^\mathrm{H}$ the concentration of the zooplankton increases. This increases the oxygen consumption by zooplankton and also decreases phytoplankton production such that beyond a threshold the system collapse.

On the contrary, if we consider the case where the zooplankton mortality is primarily due to the combined effect of the competition and 
predation by species from higher trophic levels, then one could neglect the natural mortality~\cite{SteeleHenderson92}, so that $\mu_1=0$. Using other parameter values as in~\eqref{parameters}, the Hopf bifurcation occurs at $\mu_2^{\mathrm{H}} = 0.408.$ Since the first Lyapunov coefficient is $l_1=1.065(>0),$ the Hopf bifurcation is subcritical. Among the two coexistence equilibria as obtained in Fig.~\ref{fig:coexistence_eq}, one is always saddle, the other is unstable for $\mu_2<\mu_2^{\mathrm{H}}$ and stable for $\mu_2>\mu_2^{\mathrm{H}}.$ An unstable cycle emerges from the subcritical Hopf bifurcation, increases in size in a narrow domain for $\mu_2>\mu_2^{\mathrm{H}},$ till it disappears via global bifurcation (see Fig.~\eqref{fig:local_dynamics_temporal}(b)). Beyond this, the stable extinction state $E_0$ coexists with a stable coexistence equilibrium. We now choose $\mu_1=0.24.$ Then, the subcritical Hopf bifurcation occurs at $\mu_2^{\mathrm{H}}=0.1577.$ Here the unstable cycle that emerged through Hopf bifurcation is surrounded by a stable limit cycle. The stable and unstable cycle exists in a narrow range of $\mu_2$ and collides at saddle-node bifurcation of limit cycle ($\mu_2=0.1578).$ On the other hand, for $\mu_2<\mu_2^{\mathrm{H}},$ the unstable equilibrium is surrounded by a stable limit cycle. The amplitude of the stable cycle increases and disappears through heteroclinic bifurcation at $\mu_2=0.15715$ (see Fig.~\eqref{fig:local_dynamics_temporal}(c)).

\section{Slow and fast dynamics}\label{sec:slowfast}

The system \eqref{eq:temporal} for $0<\varepsilon\ll 1$ is referred to as the singularly perturbed system. The timescale parameter $\varepsilon$ signifies a clear distinction between two timescales: slow and fast. The change in the zooplankton density occurs at a much slower rate as compared to the change in oxygen and phytoplankton density. We denote the time $`t$' in system \eqref{eq:temporal} as the fast time and the system \eqref{eq:temporal} is with respect to the fast timescale.\\

Introducing the slow timescale $\tau=\varepsilon t,$ we obtain the following slow subsystem:

\begin{equation} \label{eq:temporal_slow}
    \begin{aligned}
        \varepsilon   \frac{dc}{d\tau} &= \frac{Au}{c+1} - \frac{\delta uc}{c+c_2}- \frac{\nu cv}{c+c_3} - c = F(c,u,v), \\
 \varepsilon   \frac{du}{d\tau} &= \left(\frac{B c}{c+c_1} - u \right)u - \frac{u v}{u+h} -    \sigma u=G(c,u,v), \\
     \frac{dv}{d\tau} &= \Big(\frac{\eta c^2}{c^2+{c_4}^2}\frac{uv }{u+h} - \mu_1 v-\mu_2 v^2\Big)= H(c,u,v),
    \end{aligned}
\end{equation}
We analyze the above slow and fast systems with the help of geometric singular perturbation theory \cite{Kuehn15,Fenichel}. The basic idea behind this approach was to decompose the slow and fast systems in its limiting systems (i.e for $\varepsilon=0$) and study the dynamics of the respective subsystems. In the singular limit $\varepsilon\rightarrow0,$ we obtain the fast subsystem (oxygen-phytoplankton) of system \eqref{eq:temporal} as follows

\begin{equation}
    \begin{aligned}
     \frac{dc}{dt} &= \frac{Au}{c+1} - \frac{\delta uc}{c+c_2}- \frac{\nu cv_0}{c+c_3} - c, \\
  \frac{du}{dt} &= \left(\frac{B c}{c+c_1} - u \right)u - \frac{u v_0}{u+h} - \sigma u, \\   
    \end{aligned}
\end{equation}
with $v=v_0$ (constant zooplankton density). Also, letting $\varepsilon=0,$ in \eqref{eq:temporal_slow} we obtain the slow subsystem or the reduced system  as
\begin{equation}\label{eq:slow_sub}
         F(c,u,v) =0,\,\,
         G(c,u,v) =0,\,\,
         \frac{dv}{d\tau} = H(c,u,v),
\end{equation}
where $F,G,H$ are given above. The set $$C_0 = \{(c,u,v)\in \mathbb{R}^3: F(c,u,v)=0=G(c,u,v) \}.$$
is called critical manifold, which  is the collection of all equilibrium points or curves of the fast subsystem. It can be divided into two parts: trivial critical manifold $C_0^0$ and non-trivial critical manifold $C_0^1$ such that $C_0=C_0^0\cup C_0^1.$ The trivial critical manifold is given by the line $$C_0^0 = \{(c,u,v)\in \mathbb{R}^3: c=0,\,u=0,\,v\in \mathbb{R}\}$$ and the non-trivial critical manifold $C_0^1$ is the curve of intersection of two surfaces given by $$v=(u+h)\Big(\frac{Bc}{c+c_1}-u-\sigma\Big),\,\,v=\frac{(c+c_3)}{\nu}\Big(\frac{Au}{c+1} - \frac{\delta uc}{c+c_2}-c\Big).$$
The slow subsystem \eqref{eq:slow_sub} represents the slow change in the zooplankton density over the critical manifold. Considering the slow variable $v$ as the bifurcation parameter, we numerically obtain the critical manifold (black) as shown in Fig.~\eqref{fig:trajectory_slow_fast}. Therefore, along the critical manifold, the zooplankton density changes slowly. Since the two surfaces intersect along a curve, we find the extrema (fold point) of the curve (if any). A point $P\in C_0$ is a fold point of the critical manifold if the fast subsystem exhibits a fold bifurcation. In other words, if we consider the Jacobian matrix of the fast subsystem
\begin{equation}
    \mathcal{J}=\begin{pmatrix}F_c&F_u\\G_c&G_u
    \end{pmatrix},
\end{equation}
then at the fold point $P$, $\mathrm{rank}(\mathcal{J})=1$ and the critical manifold is non-hyperbolic. It divides the critical manifold into two halves namely attracting $C_0^a$ and repelling $C_0^r$ sub-manifold. A point $p\in C^a_0$ if both the eigenvalues of the matrix $\mathcal{J}$ evaluated at $p$ has negative real parts, whereas a point $q\in C_0^r$ if atleast one of the eigenvalues of the matrix $J$ evaluated at $p$ has positive real part. To determine the stability of the trivial manifold, we evaluate the matrix $\mathcal{J}$ along $C_0^0$ and thus obtain
\begin{equation}
    \mathcal{J}_{C^0_0}=\begin{pmatrix}
    -1-\frac{\nu v}{c_3}&A\\0&-\frac{v}{h}-\sigma
    \end{pmatrix}.
\end{equation}
Since all the parameters involved in the system are positive, the eigenvalues are given by $$-1-\frac{\nu v}{c_3}<0,\,\,\,-\frac{v}{h}-\sigma<0,$$ for $v\ge0.$ Thus the trivial manifold $C_0^0$ is stable for $v\ge0.$ Fenichel's theorem \cite{Fenichel} state that the normally hyperbolic attracting and repelling sub-manifolds, $C_0^a$ and $C^r_0$ respectively, obtained for $\varepsilon=0,$ perturb to locally invariant attracting and repelling sub-manifolds $C^a_{\varepsilon}$ and $C^r_{\varepsilon}$ respectively, for $\varepsilon>0.$ Therefore, the dynamics of the full system \eqref{eq:temporal} or \eqref{eq:temporal_slow} can be approximated by studying the dynamics of the subsystems obtained for $\varepsilon=0.$

The slow flow on the critical manifold $C_0^1$ is given by the slow subsystem \eqref{eq:slow_sub}. We differentiate $F(c,u,v)=0$ and $G(c,u,v)=0$ implicitly with respect to $`\tau$' along the critical manifold, and obtain the dynamics on the critical manifold. This is governed by the following system of equations
\begin{equation}
    \begin{aligned}
         \frac{dc}{d\tau}  = -\frac{F_vG_u-F_uG_v}{F_cG_u-F_uG_c}H,\,\,
         \frac{du}{d\tau}  = -\frac{F_vG_c-F_cG_v}{G_cF_u-G_uF_c}H ,\,\,
         \frac{dv}{d\tau} = ~~H,
    \end{aligned}
\end{equation}
with suitable initial condition $(c_0,u_0,v_0)\in C_0^1.$ The slow flow has a singularity whenever $G_cF_u-G_uF_c=0,$ which holds at the fold point $P.$ Thus, the solution blows up at this point. Whenever $F_vG_u-F_uG_v\ne0$ or $F_vG_c-F_cG_v\ne0,$ the fold point is called the jump point, and the trajectory jumps from the proximity of the fold point to another attracting critical manifold. However, when both $F_vG_u-F_uG_v=0$ and $F_vG_c-F_cG_v=0,$ the fold point is called canard point. At this point, the trajectory can pass through the proximity of the fold point and follow the repelling manifold for $O(1)$ time.

\noindent The coexisting equilibrium loses its stability and exhibits oscillatory dynamics through Hopf bifurcation (supercritical or subcritical), which we discussed in the previous section. In a classical slow-fast setting, the small cycles originating from Hopf bifurcation bifurcates to canard cycles (with or without head) and further to relaxation oscillation, thus exhibiting canard explosion \cite{Krupa01b}. These cycles are composed of slow and fast segments, where the slow flow occurs along both attracting and repelling sub-manifold of the critical manifold. It exhibits fast flow when the trajectory jumps to another stable portion of the critical manifold. However, for the system \eqref{eq:temporal}, small canard cycles (without head) emerge from Hopf bifurcation. The amplitude of the cycles increases in a narrow parametric range, eventually leading to complete extinction. We prove this fact in the following theorem.

\begin{theorem}
Assume the fold point $P$ is a canard point for $\varepsilon>0.$ Then the system \eqref{eq:temporal} has small amplitude canard cycles (without head) originating from Hopf bifurcation but there does not exist any relaxation oscillation.
\end{theorem}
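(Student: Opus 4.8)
The plan is to combine the local canard theory of Krupa and Szmolyan \cite{Krupa01b} with a global analysis of the fast flow that exploits the permanent stability of the trivial critical manifold $C_0^0$. First I would establish the local canard structure near the fold point $P$. Because $P$ is a canard point, both numerators $F_vG_u-F_uG_v$ and $F_vG_c-F_cG_v$ in the slow-flow equations vanish there, so the reduced flow stays bounded and a trajectory may cross from the attracting branch $C_0^a$ onto the repelling branch $C_0^r$. Since only one fast eigenvalue degenerates at the fold while the transverse fast direction stays hyperbolic, I would reduce the dynamics near $P$ to a two-dimensional centre manifold carrying the critical-fast and the slow direction, bringing the layer problem into the standard fold normal form. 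On this reduced planar system the canard explosion theory applies: the Hopf bifurcation of the previous section supplies a branch of small limit cycles, and as the bifurcation parameter is swept through an exponentially thin interval these cycles elongate along $C_0^r$ into canard cycles. Their ``without head'' character follows once I verify that the singular canard joins $C_0^a$ to $C_0^r$ and returns directly, with no intervening fast excursion to a second attracting sheet; concretely this reduces to checking the sign of the explosion coefficient in the normal form against the Hopf criticality already computed.

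Next I would fix the global geometry of the critical manifold. The structural input is that $C_0^1$ folds only at $P$, so it consists of a single attracting branch $C_0^a$ and a single repelling branch $C_0^r$ meeting there; there is no second fold, hence no second attracting sheet inside $C_0^1$. By contrast the trivial manifold $C_0^0=\{c=0,\,u=0\}$ is attracting for every $v\ge 0$, as already shown from the eigenvalues of $\mathcal{J}_{C^0_0}$. A relaxation oscillation would require a closed singular orbit built from two slow segments lying on distinct attracting sheets and joined by two fast fibres, so the only admissible attracting sheets are $C_0^a$ and $C_0^0$.

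I would then rule out relaxation oscillations by tracking the fast fibre leaving a neighbourhood of the fold. When an orbit following $C_0^a$ reaches $P$ and jumps, the fast flow must limit onto an attractor of the layer problem; since $C_0^a$ is the branch just vacated and $C_0^1$ offers no alternative attracting sheet, the fibre is driven toward the extinction manifold $C_0^0$. Once the orbit enters the basin of the attracting line $C_0^0$, the slow flow on $c=u=0$ forces $c$ and $u$ to remain identically zero, so nothing can re-ignite phytoplankton or oxygen growth: the trajectory is absorbed and cannot close into a periodic orbit. Hence the canard family cannot continue past the head-free stage into a relaxation oscillation; instead the enlarging canard terminates in extinction, matching the collapse seen numerically.

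The main obstacle is this global step: proving rigorously that the fast fibre emanating from $P$ falls into the basin of $C_0^0$ rather than returning to $C_0^a$ or reaching some unforeseen attractor of the layer problem. Establishing this demands control of the entire phase portrait of the planar fast subsystem with its Monod-type nonlinearities for slow-variable values beyond the fold, which is analytically delicate; I expect to combine invariant-region and nullcline arguments with numerical continuation of the fast fibres to close the gap.
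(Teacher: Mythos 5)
Your proposal follows essentially the same route as the paper's proof: small head-free canard cycles emerge from the singular Hopf bifurcation near the canard point $P$, and relaxation oscillations are excluded because the jump from the fold lands near the trivial critical manifold $C_0^0$, which is attracting for all $v\ge 0$ and whose slow flow drives every trajectory into the stable equilibrium $E_0$, so the global return mechanism required for a closed singular orbit fails. The rigor gap you flag at the end --- proving the fast fibre from $P$ actually enters the basin of $C_0^0$ rather than some other attractor of the layer problem --- is equally present in the paper, which simply asserts that the trajectory jumps ``close to $C_0^0$,'' so your account is, if anything, more candid about the same argument.
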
 

\begin{proof}
In the slow-fast setting, we denote the Hopf bifurcation as singular Hopf bifurcation since the eigenvalues of the Jacobian matrix of the system \eqref{eq:temporal} evaluated at $E_*$ has purely imaginary complex eigenvalue of the form 
$$\lambda_{1,2} = \pm ~i \omega(\mu_2^{\mathrm{H}},\varepsilon)$$ such that $\lim_{\varepsilon\rightarrow0} \omega(\mu_2^{\mathrm{H}},\varepsilon)=0.$ The singular Hopf bifurcation occurs at $O(\varepsilon)$ from the fold point P. We assume $F_vG_u-F_uG_v=0$ and $F_vG_c-F_cG_v=0$ such that the fold point is the canard point. The small limit cycle originating from Hopf bifurcation grows in size through a sequence of canard cycles. With the decrease in $\varepsilon,$ the amplitude of the cycle increases, and after a certain threshold, the trajectory jumps from the vicinity of the fold point $P$ close to $C_0^0.$
The equilibrium point $E_0$ lies on the trivial critical manifold. The eigenvalues of the Jacobian matrix $J_{E_0}$
are $-1,\,-\sigma,\,\,-\varepsilon\mu_1$ and the corresponding eigenvectors are $$\left(1,0,0\right),\,\left(1,\frac{1-\sigma}{A},0\right),\,\ \text{and}\,\ \left(0,0,1\right).$$ Therefore the critical manifold $C_0^0$ coincides with the eigenvector. Thus, any trajectory on $C_0^0$ converges to $E_0.$ We cannot construct any singular orbit consisting of concatenated slow segments on $C_0^1$ and $C_0^0,$ and fast fibers while leaving the respective manifolds. Hence, the global return mechanism, which is necessary for the existence of classical relaxation oscillation, fails as all the trajectories converge to the stable equilibrium $E_0$. 
\end{proof}

\begin{figure}[ht!]
    \centering
    \subfigure[]{\includegraphics[scale=0.38]{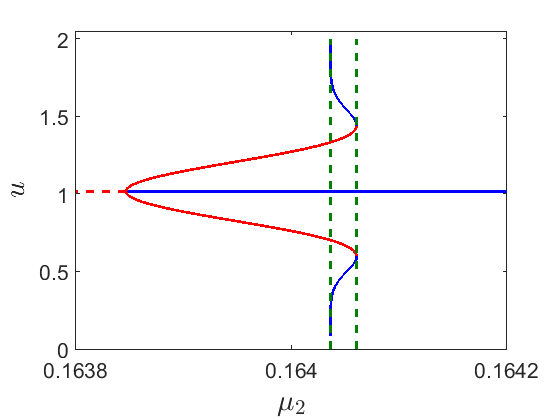}}
    \subfigure[]{\includegraphics[scale=0.38]{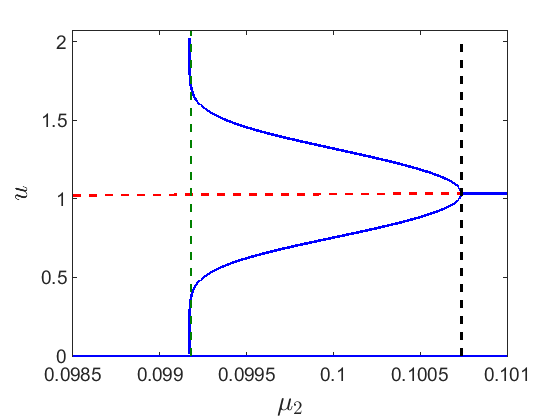}}
    \caption{\textcolor{black}{The change in the amplitude of the canard cycle emerging from the singular Hopf bifurcation with varying $\mu_2$ is shown for (a) $\mu_1=0.24,\,\varepsilon=0.5,$ and (b) $\mu_1=0.3,\,\varepsilon=0.5.$  The blue lines show the steady state value of of the coexistence equilibrium when it is stable and the maximum and minimum amplitude of the stable canard cycle when the equilibrium is unstable. The horizontal red (broken) line shows the steady state value of the coexistence equilibrium when it is unstable. The vertical black (broken) line marks the singular Hopf bifurcation threshold (occurring at $\mu_2=0.1007$) and the vertical green (broken) line at $\mu_2=0.099171$ indicate the threshold for the system collapse (plankton extinction and oxygen depletion). Other parameters of the system are given in \eqref{parameters}.}}
    \label{fig:bifurcation_canard_explosion}
\end{figure}

We illustrate this phenomenon with the help of a numerical example in Fig.~\ref{fig:bifurcation_canard_explosion}. We use the parameter values as in~\eqref{parameters} along with $\varepsilon=0.5$. Consider a hypothetical value $\mu_1=0.24.$  The effect of $\varepsilon$ on the dynamics of the system can be observed if we compare the Fig.~\ref{fig:bifurcation_canard_explosion}(a) with Fig.~\ref{fig:local_dynamics_temporal}(c). The singular Hopf bifurcation occurs at $\mu_2^{\mathrm{H}}=0.1638$ is subcritical ($l_1=0.037$). Small unstable canard cycles emerge from the canard point $P~(1.24,1.01,0.89)$, which grows in size with a slight increase in $\mu_2$ (Fig.~\ref{fig:bifurcation_canard_explosion} (a)). A large amplitude stable cycle emerges from a heteroclinic bifurcation that coexists with the stable equilibrium, separated by an unstable canard cycle. We observe that the size of the stable cycle shrinks in an extremely narrow parameter interval and disappears at a saddle node bifurcation of limit cycles. 

We now consider $\mu_1=0.3$ and $\varepsilon=0.5,$ then the singular Hopf bifurcation occurs at $\mu_2^{\mathrm{H}} = 0.1007.$ The first Lyapunov coefficient is $l_1=-1.1818$, hence the singular Hopf is supercritical. From the canard point $P~(1.255,1.035,0.898),$ small stable canard cycles originate. We show the change in the amplitude of the canard cycles with decreasing values of $\mu_2$ in Fig.~\ref{fig:bifurcation_canard_explosion}(b). This depicts that the system becomes unstable with a decrease in the strength of the intra-specific competition among the zooplankton. The transition from the stable, steady state to the oxygen-free state, indicating complete population collapse, takes place in an extremely narrow interval of the rate of intraspecific competition. That is, for $\mu_2 \in (0.099171,0.1007).$ At $\mu_2=0.09917,$ when the size of the limit cycle explodes, the trajectory converges to the origin, which is illustrated in Fig.~\ref{fig:trajectory_slow_fast}(a). The time series of the trajectory is shown in Fig.~\ref{fig:trajectory_slow_fast}(b). This implies that the system cannot further sustain any large amplitude oscillations. The rise in the amplitude of the phytoplankton level beyond a threshold can act as an indicator of population collapse. The system can therefore be driven to total extinction by pushing it far enough to reach the fold point. Along with $\mu_2,$ the timescale separation plays a critical role in identifying this narrow parametric regime. For a larger timescale separation (i.e.~smaller $\varepsilon$), the coexistence of oxygen-plankton occurs in a significantly narrower interval, and the system is more vulnerable to perturbation. 

\begin{figure}[!t]
    \centering
    \subfigure[]{\includegraphics[scale=0.45]{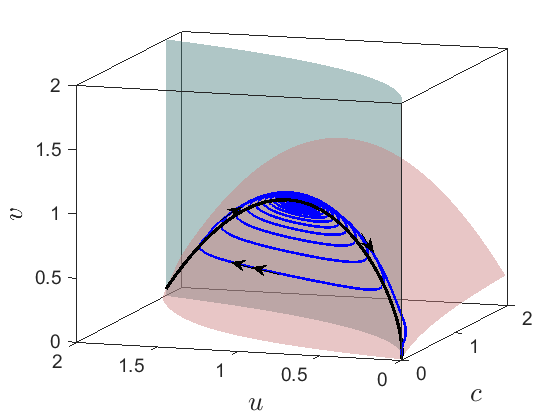}}
    \subfigure[]{\includegraphics[scale=0.43]{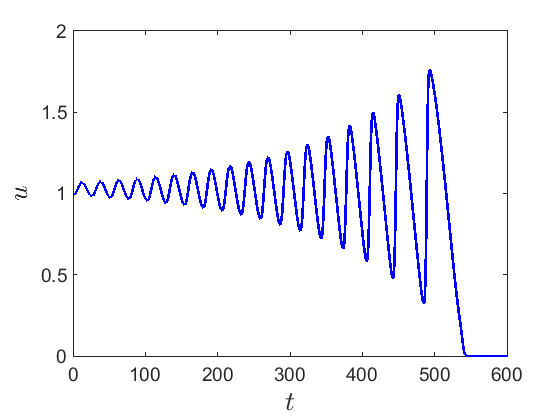}}
    \caption{(a) A trajectory (blue) converging to the origin (extinction) after oscillations of increasing amplitude obtained for $\mu_1=0.3,\,\mu_2 = 0.09917$ and $\varepsilon=0.5.$ The other system parameters are mentioned in equation \eqref{parameters}. \textcolor{black}{The surfaces $F=0$ and $G=0$ are shaded in green and brown, respectively. The black curve on the intersection of these surfaces is the critical manifold $C^1_0.$ The single and double arrows represent slow and fast motion, respectively.} (b) The corresponding dependence of the phytoplankton density on time. }
    \label{fig:trajectory_slow_fast}
\end{figure}

\section{The spatial system}\label{sec:spatial}

In the real world ocean environment, the spatial distribution of both plankton and dissolved oxygen is remarkably heterogeneous, sometimes showing the variability by an order of magnitude of even more \cite{Steele74,Levin90,Martin03,Ito17,Richardson17}. 
Correspondingly, in this section, we consider a spatially extended model \eqref{eq:temporal} where the oxygen concentration and the phytoplankton and zooplankton densities vary with both time and space. The ocean system is three-dimensional; however, in this paper, for the sake of simplicity, we only consider one horizontal spatial dimension. We regard it as the position along the ocean surface. In terms of ocean observations, it corresponds to a transect across the study area. 

Trying to keep the model as simple as possible, we avoid using explicit dependence on the vertical dimension (i.e.~the depth). Correspondingly, we use the well-mixed layer approximation~\cite{Steele81,Steele92,Franks02} to assume that the vertical distribution of plankton and oxygen is approximately uniform within the photic (upper) ocean layer where most of photosynthetic oxygen production takes place. 

The transport of any substance in the ocean takes place primarily due to the water movement. The movement in the horizontal direction occurs either due to an ocean current or marine turbulence (or their combination). Here we chose to focus on the effect of unbiased (isotropic) movement, hence taking into account only the effect of turbulence, which we describe as the turbulent diffusion quantified by a certain diffusion coefficient \cite{Monin71,Okubo80}. 

We, therefore, arrive at the following equations:
\begin{equation}\label{eq:spatio-temp_local}
\begin{aligned}
      \frac{\partial c}{dt} &= ~~D_c\frac{\partial^2{c}}{\partial{x}^2} +\frac{Au}{c+1} - \frac{\delta uc}{c+c_2}- \frac{\nu cv}{c+c_3} - c, \\
\frac{\partial u}{dt} &= ~~D_u\frac{\partial^2{u}}{\partial{x}^2} +\left(\frac{B c}{c+c_1} - u \right)u - \frac{u v}{u+h} - \sigma u, \\
 \frac{\partial v}{dt} &= ~~D_v\frac{\partial^2{v}}{\partial{x}^2} +\varepsilon\Big(\frac{\eta c^2}{c^2+{c_4}^2}\frac{uv }{u+h} - \mu_1v-\mu_2 v^2\Big).
\end{aligned}
\end{equation}
Here $c(x,t),\,u(x,t),$ and $v(x,t)$ are, respectively, the oxygen concentration and the phytoplankton and zooplankton densities at the (horizontal) location $x$ and time $t$; 
$D_c,\,D_u$ and $D_v$ denote the diffusion coefficients for oxygen, phytoplankton, and zooplankton. 
Note that phytoplankton and the dissolved oxygen can be regarded as a `passive substance', i.e.~their spatial transport is entirely determined by the water flows; hence $D_c=D_u=D_T$ where $D_T$ is the turbulent diffusion coefficient. 
However, zooplankton has a certain ability to self-motion. Combined with the effect of turbulent mixing, it can result in a value of $D_v\ne D_T$. Whether $D_v$ is larger or smaller, then depends on the zooplankton movement pattern. In case zooplankton movement is entirely random (e.g.~can be regarded as Brownian motion), then one can expect that $D_v>D_T$. In case zooplankton exhibits a homing behavior, then it is likely that $D_v<D_T$. 

Along with the temporal parameters, we now non-dimensionalize the space as $\Tilde{x}=\frac{x}{\sqrt{D_c}}$. Removing the tilde for the simplicity of the notation, we obtain the following dimensionless spatial model:
\begin{equation}\label{eq:spatio-temp_local}
\begin{aligned}
      \frac{\partial c}{dt} &= ~~\frac{\partial^2{c}}{\partial{x}^2} +\frac{Au}{c+1} - \frac{\delta uc}{c+c_2}- \frac{\nu cv}{c+c_3} - c, \\
\frac{\partial u}{dt} &= ~~\frac{\partial^2{u}}{\partial{x}^2} +\left(\frac{B c}{c+c_1} - u \right)u - \frac{u v}{u+h} - \sigma u, \\
 \frac{\partial v}{dt} &= ~~D\frac{\partial^2{v}}{\partial{x}^2} +\varepsilon\Big(\frac{\eta c^2}{c^2+{c_4}^2}\frac{uv }{u+h} - \mu_1v-\mu_2 v^2\Big),
\end{aligned}
\end{equation}
where $D = \frac{D_v}{D_c}$. Equations (\ref{eq:spatio-temp_local}) are considered inside the spatial domain $\Omega=\{x\in (0,L)\}$ where $L$ is thus the length of the domain. 

Equations (\ref{eq:spatio-temp_local}) must be complemented with the initial conditions, which we consider in the following form:
\begin{eqnarray}\label{IC}
c(x,0)\,=\,\left\{
\begin{array}{ll}
c_*+0.5, & |x-L/2|\,<\,10\\
c_*,  & \text{otherwise} \\
\end{array}\right.,\,\,\ u(x,0)\,=\,\left\{
\begin{array}{ll}
u_*+0.2, & |x-L/2|\,<\,10\\
u_*,  & \text{otherwise} \\
\end{array}\right.,
\end{eqnarray} $$v(x,0)\,=
v_*,\quad \forall x \in \Omega.$$
That is, at $t=0$ the steady state densities are perturbed within a small area at the center of the domain $\Omega$. 

For the boundary conditions, we consider the zero-flux conditions:
\begin{equation}\label{BC}
	    c_x(0,t)\,=\,c_x(L,t)= u_x(0,t)\,=\,u_x(L,t)\,=\,v_x(0,t)\,=\,v_x(L,t)\,=\,0,\,\,t\,>\,0.
\end{equation}

The model is solved numerically with $L=500.$ We use the Euler method for the temporal part and five points central difference scheme for the diffusion part along with $\Delta x=1$ and $\Delta t=0.01.$

\subsection{Turing instability}\label{Sec:Turing_instability}

To study the spatial distribution of oxygen and plankton, we start our analysis in a neighborhood of a homogeneous steady-state solution of \eqref{eq:spatio-temp_local}. Time-independent or a steady state solution $(c(x),u(x),v(x))$ of the system \eqref{eq:spatio-temp_local}-\eqref{BC} satisfies the following system of equations

\medskip

\begin{equation}
    \begin{aligned}
         &\frac{\partial^2{c}}{\partial{x}^2} +\frac{Au}{c+1} - \frac{\delta uc}{c+c_2}- \frac{\nu cv}{c+c_3} - c =0, \\
   & \frac{\partial^2{u}}{\partial{x}^2} +\left(\frac{B c}{c+c_1} - u \right)u - \frac{u v}{u+h} - \sigma u =0, \\
   &D\frac{\partial^2{v}}{\partial{x}^2} +\varepsilon\Big(\frac{\eta c^2}{c^2+{c_4}^2}\frac{uv }{u+h} - \mu_1v-\mu_2 v^2\Big) =0.
    \end{aligned}
\end{equation}

\medskip
The coexistence equilibrium $E_*$ of system \eqref{eq:temporal} corresponds to a homogeneous steady state solution of the above system. \textcolor{black}{To study the dynamics of the above system near the homogeneous steady-state solution $E_*,$ we give small heterogenous perturbation as 
\begin{equation}\label{eq:hetero_perturb}
    c(x,t)=c_*+\xi_1 e^{\lambda t}\cos kx,\,u(x,t)=u_*+\xi_2 e^{\lambda t}\cos kx,\,v(x,t)=v_*+\xi_3 e^{\lambda t}\cos kx
\end{equation}
with $0<\xi_1,\xi_2,\xi_3\ll 1.$ The parameter $k$ is the wavenumber of the eigenfunction, and $\lambda$ is the eigenvalue determining the temporal growth of the corresponding $k^{\mathrm{th}}$ mode. We obtain the linearized system as 
\begin{equation}\label{eq:spatio_linear}
       \mathbf{Z}_t = J_{E_*}\mathbf{Z} + \mathcal{D}\Delta \mathbf{Z}
\end{equation}
where $\mathbf{Z}\equiv (z_1,z_2,z_3),$ $\mathcal{D}\equiv\mathrm{diag}(1,1,D).$
For the non-trivial solution of the above system \eqref{eq:spatio_linear}, the eigenvalues $\lambda$ are determined by
the roots of the characteristic polynomial $\det (\lambda I -J_{E_*} + \mathcal{D}k^2)=0,$ which is written explicitly as}
\begin{equation}
  \lambda^3+p_2(k^2)\lambda^2+p_1(k^2)\lambda+p_0(k^2)=0,
\end{equation}
where 
\medskip
\begin{equation}
    \begin{aligned}
         p_2(k^2) = &~~(2+D)k^2-\mathrm{tr}(J_{E_*}),\\
         p_1(k^2) = &~~(1+2D)k^4 -((J_{22}+J_{33})+(J_{11}+J_{33})+D(J_{11}+J_{22}))k^2 + (J_{11}^{[1]}+J_{22}^{[2]}+J_{33}^{[3]}),\\
         p_0(k^2) =&~~ D k^6 - ((J_{11} + J_{22})D + J_{33}) k^4 + ( J_{11}^{[1]} + J_{22}^{[2]} +  J_{33}^{[3]}D) k^2 - \det(J_{E_*}), 
    \end{aligned}
\end{equation}
and $J_{ij}$ and $J_{ii}^{[i]}$ are the same as obtained during the analysis of the temporal part. We, therefore, obtain the necessary and sufficient conditions for Turing instability as
\medskip

\begin{equation}
    \begin{aligned}
         p_2(0)>0,\,\,p_0(0)>0,\,\,p_1(0)p_2(0)>p_0(0)\,\,\text{and}\,\,p_0(k^2)<0,\,\,\text{for some}\,\,k.
    \end{aligned}
\end{equation}

Therefore, the Turing instability occurs at a critical wave number, $k=k_T,$ where $p_0(k^2)$ achieves a minimum and $p_0(k_T^2)=0.$ This gives 
\begin{equation}\label{eq:Turing_critical}
    \begin{aligned}
         k_T^2 = &~~  \frac{J_{11}+J_{22}}{3} + \frac{1}{3D} (J_{33} + \sqrt{\Lambda })
     \end{aligned}
\end{equation}

where $$\Lambda = \left(J_{11}^2+J_{22}^2-J_{11}J_{22}+3J_{12}J_{21}\right)D^2 + D\left(3J_{13}J_{31}+3J_{23}J_{32}-J_{11}J_{33}-J_{22}J_{33}\right) + J_{33}^2,$$
where $J_{ij}$ are the elements of the Jacobian matrix, cf.~Eq.~(\ref{eq:Jacobi}). 
Because of the complexity of the expression and the large number of parameters involved, the critical wave number $k_T$ corresponding to the Turing bifurcation has to be computed numerically. For a feasible $k_T$, the model describes the formation of spatial patterns, as is shown below.

\subsection{Impact of diffusion on Oxygen Minimum Zone}\label{sec:OMZ-Turing}

We now look into the spatio-temporal dynamics of the system (\ref{eq:spatio-temp_local}) with the initial conditions (\ref{IC}). Our goal is to reveal typical dynamical regimes (in particular, pattern formation scenarios, if any) for parameters inside and outside of the Turing domain. \textcolor{black}{In a pure Turing domain all the Turing instability conditions as discussed above holds. However, in a Turing-Hopf domain, the homogeneous steady state is unstable under both temporal and spatio-temporal perturbations. } We fix the parameter values as in \eqref{parameters} and let $\mu_1=0,\,\mu_2=0.41$ and consider different values of the diffusivity ratio $D$.  

Fig.~\ref{fig:Turing_1} shows typical patterns in the distribution of oxygen for different diffusivity rates. Here Fig.~\ref{fig:Turing_1}a,b and Fig.~\ref{fig:Turing_1}c are obtained, respectively, for parameters outside and inside the Turing instability domain. 
We notice that, in all three cases, the evolution of the initial conditions soon leads to the formation of a patch where the oxygen concentration is much lower than its steady state value. We interpret this dynamics as the formation of an OMZ. Further evolution of the emerging OMZ can be significantly different depending on $D$. 
When the diffusivity ratio is small, i.e.~$D<1,$ the OMZ created at the early stage grows with time and eventually spreads over the entire domain. Interestingly, the growing OMZ has a fine structure. A closer look at the dynamics shown in Fig.~\ref{fig:Turing_1}a reveals that, at any time $t$ during the transient stage of the OMZ expansion, it consists of three or four subdomains with very low oxygen level separated by narrow spatial intervals where the oxygen level is larger than its steady state value $c_*$. This fine structure disappears after the expanding OMZ hits the domain boundaries; at a later time the oxygen level is low over the entire domain, which can be interpreted as the global anoxia.  

\begin{figure}[!b]
    \centering
  \subfigure[$D=0.8$]{\includegraphics[scale=0.35]{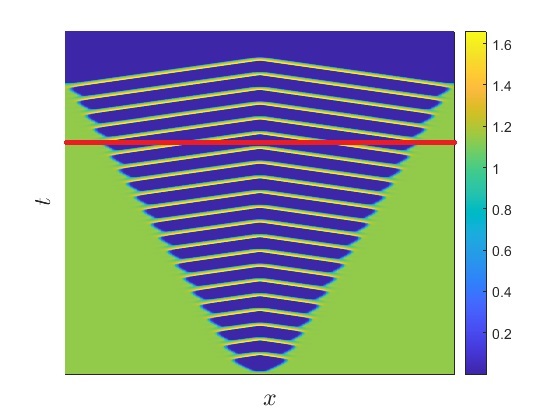}}
    \subfigure[$D=1$]{\includegraphics[scale=0.35]{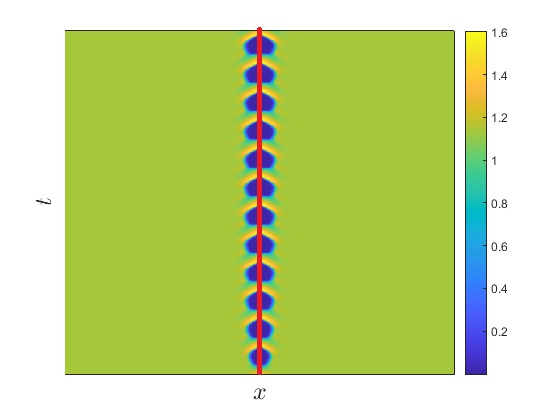}}
     \subfigure[$D=5$]{\includegraphics[scale=0.35]{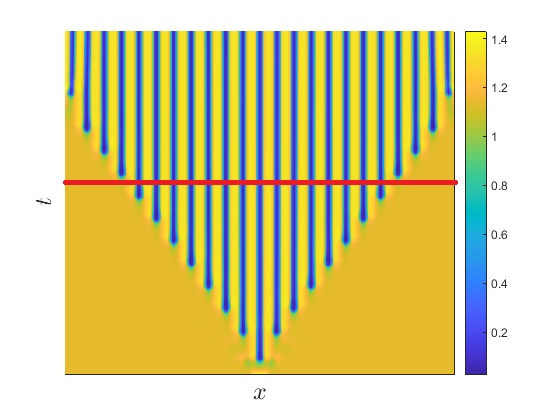}}
     \caption{Transition of spatio-temporal dynamics of oxygen from non-Turing (panels (a) and (b)) to Turing (panel (c)) pattern formation for $\mu_1=0,\, \mu_2=0.41,\,\varepsilon=1$ and different values of $D$. All other parameters are given in \eqref{parameters}; the corresponding steady state value $c_*\approx 1.2$. The auxiliary red lines help to reveal the properties the oxygen distribution of oxygen at a given moment of time (as in panels (a) and (c)) or at a given location in space (as in panel (b)); see details in the text.}    \label{fig:Turing_1}
\end{figure}

An increase in the diffusivity ratio to $D=1$ results in a qualitative change in the dynamics; see Fig.~\ref{fig:Turing_1}(b). In this case, the OMZ formed at an early stage of the system dynamics show almost no spatial growth remaining localised around the centre of the domain. At any spatial position inside the OMZ, the oxygen level distinctly oscillates with time between a very low level (approximately $0.1c_*$) to a high level (of about $1.3c_*$). 

A further increase in $D$ leads to another qualitative change in the dynamical pattern. Figure \ref{fig:Turing_1}(c) shows the results obtained for $D=5$. In this case, the system satisfies the Turing instability condition with the critical wavenumber $k_T^2 = 0.1095.$ The evolution of the initial conditions leads to the formation, inside a certain subdomain, of a periodic spatial pattern where low oxygen patches alternate with high oxygen patches. The subdomain containing this periodic structure grows with time and eventually occupies the whole domain, so that at a large time the periodic spatial distribution becomes stationary.  

We note here that the pattern of the OMZ formation and spread is relatively robust to the initial conditions. For instance, if at $t=0$ the spatial distribution of oxygen is perturbed along with that of phytoplankton, the emerging patterns are similar to the ones shown in Fig.~\ref{fig:Turing_1}. One example is shown in Fig.~\ref{fig:OMZ_area}. In this case, the initial conditions \eqref{IC} are slightly modified, so that, at the center of the domain both $c(x,0)$ and $u(x,0)$ are less than their steady state values. It is readily seen that the top and the bottom of Fig.~\ref{fig:OMZ_area} show very similar patterns, with the only difference that the spatial size of the emerging OMZ becomes larger for the modified initial conditions.  

\begin{figure}[ht!]
    \centering
    \includegraphics[scale=0.4]{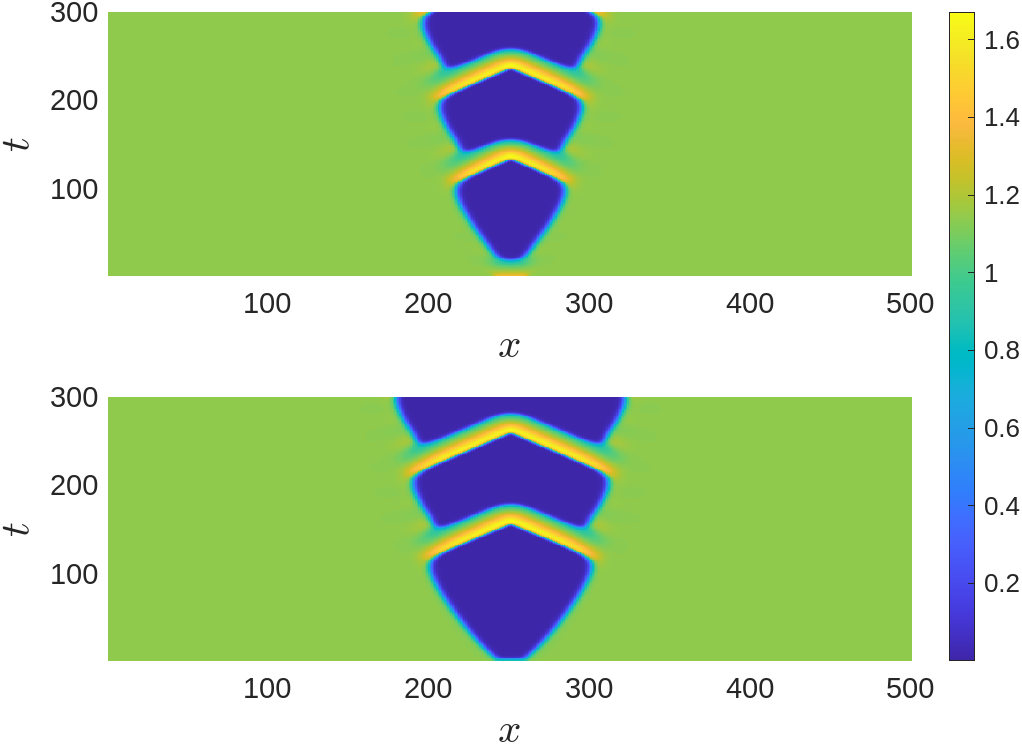}
    \caption{(Top) Zoomed plot of Fig.\ref{fig:Turing_1}(a), which is obtained using the initial condition \eqref{IC}. (Bottom) The initial conditions are of the form \eqref{IC} but with $c(x,0) = c_*-0.5$ and $u(x,0) = u_*-0.2$ for $|x-\frac{L}{2}|<10$. All the parameters are same as in Fig.~\ref{fig:Turing_1}(a). The modified initial conditions therefore result in the formation of the OMZ of a larger size.}
    \label{fig:OMZ_area}
\end{figure}

\subsection{Impact of timescale separation on Oxygen Minimum Zone}

From the mathematical analysis in the subsection \ref{Sec:Turing_instability}, we obtain that the critical wavenumber $k_T$ for Turing instability depends on the timescale separation $\varepsilon$, as the elements $J_{31}$, $J_{32}$ and $J_{33}$ of the Jacobian matrix in Eq.~\ref{eq:Turing_critical} depend on $\varepsilon$. Thus, one can expect that the boundaries of the parameter ranges where the Turing and the Turing-Hopf instability occur (resulting in pattern formation) can shift with a decrease in $\varepsilon$, i.e.~with an increase in the timescale separation. Numerical simulations confirm that this is indeed the case. Having fixed $\mu_1=0,\,\mu_2=0.41$, $D=5$ and other parameters as in \eqref{parameters} and only varying $\varepsilon$, we obtain stationary patterns in both the Turing domain and in the Turing-Hopf domain for $\varepsilon\ge 0.18.$ A typical pattern is shown in Fig.~\ref{fig:Turing_1}(c). With a decrease in $\varepsilon$, the emerging stationary pattern has a similar nature of alternating patches of high and low oxygen level as for $\varepsilon=1$ (cf.~Fig.~\ref{fig:Turing_1}c) but the size of the patches becomes larger; e.g.~see Fig.~\ref{fig:pattern_bif}(a). Also, the emergence of the stationary periodic pattern is preceded by rather long transient dynamics when the oxygen concentration and the plankton densities exhibit irregular oscillations (see Fig.~\ref{fig:pattern_bif}(d)). 

With a further decrease in $\varepsilon$, the dynamics becomes qualitatively different. The emerging pattern is not spatially periodic any more; see Fig.~\ref{fig:pattern_bif}(b). Apart from the large OMZ formed around the centre of the domain at the early stage of system's dynamics, there are two large OMZs at the sides of the domain. At a later time, these patches of low oxygen level break to a number of smaller patches of variable size. The dynamics is not becoming stationary at any time as the patches keep changing their size (and some of them also their location). The dependence of spatially average densities is distinctly irregular (see Fig.~\ref{fig:pattern_bif}(e)) suggesting chaotic dynamics. This kind of dynamic pattern is observed for $0.07<\varepsilon<0.18$. 

With a further decrease in $\varepsilon$ (below $\varepsilon=0.07$), the system's dynamics undergo a regime shift. For $\varepsilon<0.07$, the transient apparently chaotic dynamics only last for a finite time. 
After a sufficiently long time, the system experiences a catastrophic change when over a short transition time the oxygen concentration fast drops to a very small value (and eventually to zero) over the entire spatial domain. An example of such regime shift is shown in Fig.~\ref{fig:pattern_bif}(c,f) (obtained for $\varepsilon=0.06$). The entire area becomes a dead zone (with low or no oxygen), which can be interpreted as the global anoxia. Along with oxygen, the phyto- and zooplankton densities go to zero as well (after several irregular oscillations of increasing amplitude, cf.~Fig.~\ref{fig:pattern_bif}(f)), obviously signifying their extinction. 

\begin{figure}[ht!]
    \centering
    \subfigure[$\varepsilon=0.18$]{\includegraphics[scale=0.35]{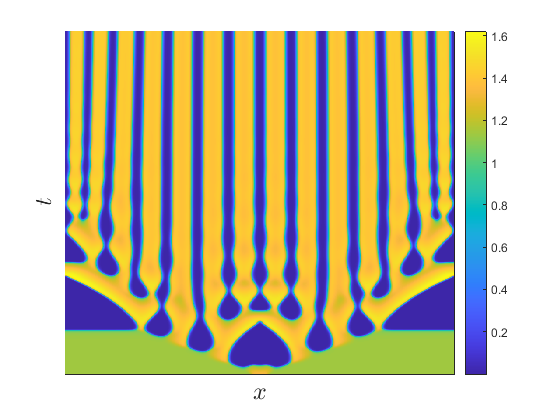}}
    \subfigure[$\varepsilon=0.1$]{\includegraphics[scale=0.35]{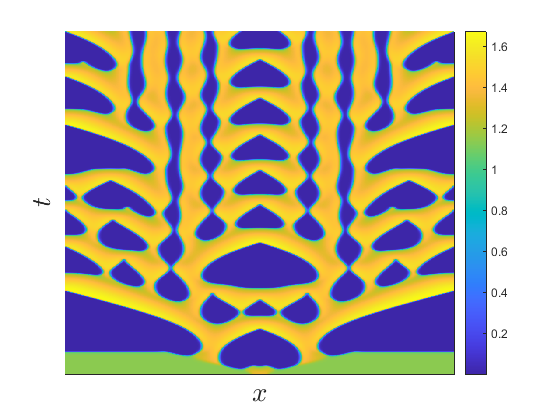}}
    \subfigure[$\varepsilon=0.06$]{\includegraphics[scale=0.35]{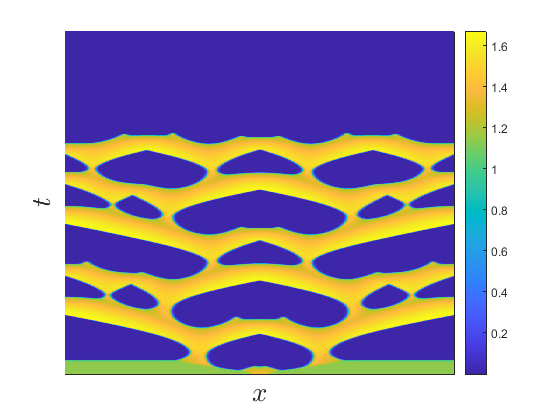}}
    \subfigure[$\varepsilon=0.18$]{\includegraphics[scale=0.35]{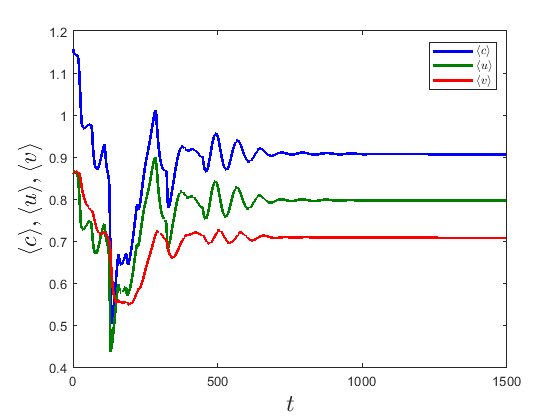}}
    \subfigure[$\varepsilon=0.1$]{\includegraphics[scale=0.35]{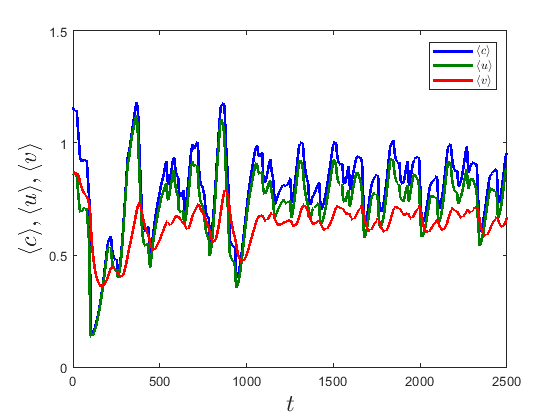}}
    \subfigure[$\varepsilon=0.06$]{\includegraphics[scale=0.35]{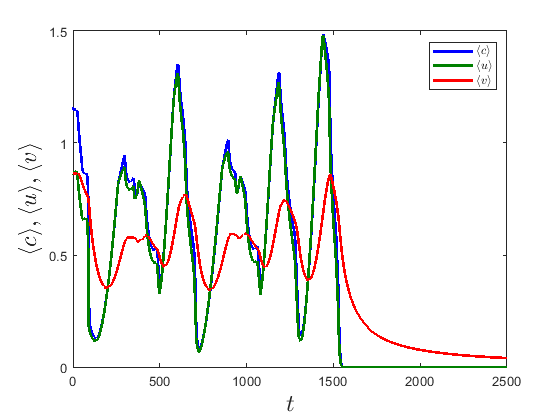}}
    \caption{(a,b,c) Spatial distribution of oxygen;  (d,e,f) spatial average density of oxygen, phytoplankton, and zooplankton for the parameter values \eqref{parameters} with $\mu_1=0,\,\mu_2=0.41,$ and $D=5.$} 
    \label{fig:pattern_bif}
\end{figure}

For both $\mu_1>0$ and $\mu_2>0$, the system's dynamics becomes different and exhibits a somewhat greater variety of dynamical regimes. As one example, Figs.~\ref{fig:pattern_bif_mu1_mu2}(a,e) shows the spatiotemporal dynamics for $\mu_1=0.24,\,\mu_2=0.1575$, $D=\varepsilon=1$ and other parameters the same as in Fig.~\ref{fig:Turing_1}(b,e). It is readily seen that, in this particular case, the evolution of the initial condition does not lead to formation of OMZ. It only leads to small fluctuations in the oxygen level and plankton densities around the location of the initial perturbation, with the spatial distribution being uniform in the rest of the domain.

A decrease in $\varepsilon$ leads to the emergence of the OMZ. It first appears at the position of the initial perturbation (i.e.~near the centre of the domain); see the bottom of Figs.~\ref{fig:pattern_bif_mu1_mu2}(b,c). At a later time, it breaks to several patches that fast spread over the entire domain. The spatiotemporal dynamics is apparently chaotic for $\varepsilon=0.5$ but becomes more regular for $\varepsilon=0.25$, cf.~Figs.~\ref{fig:pattern_bif_mu1_mu2}(f,g). 

A further decrease in $\varepsilon$ below a certain critical value results in a regime shift, e.g.~see Figs.~\ref{fig:pattern_bif_mu1_mu2}(d,h) obtained for $\varepsilon=0.2$. In this case, a large OMZ is formed at an early stage of system's dynamics (see the bottom of Fig.~\ref{fig:pattern_bif_mu1_mu2}(d)). However, after a relatively short time the oxygen concentration fast drops to zero over the entire domain: the global anoxia occurs accompanied by the plankton extinction. 

\begin{figure}[ht!]
    \centering
    \subfigure[$\varepsilon=1$]{\includegraphics[scale=0.25]{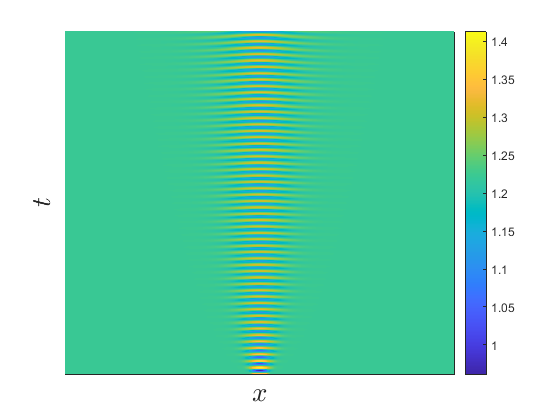}}
    \subfigure[$\varepsilon=0.5$]{\includegraphics[scale=0.25]{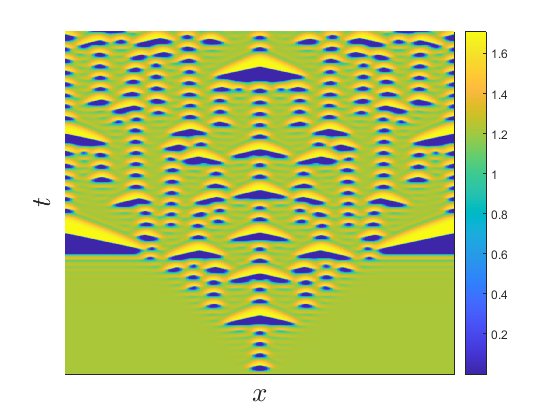}}
    \subfigure[$\varepsilon=0.25$]{\includegraphics[scale=0.25]{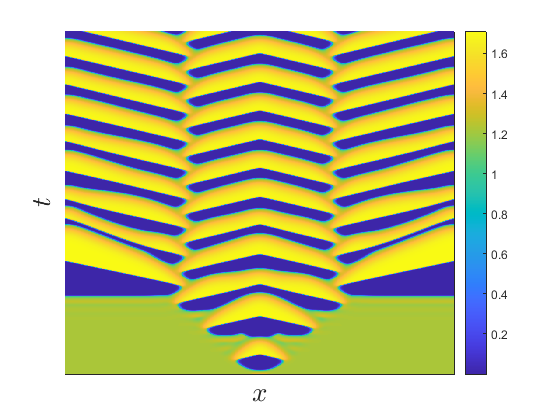}}
     \subfigure[$\varepsilon=0.2$]{\includegraphics[scale=0.25]{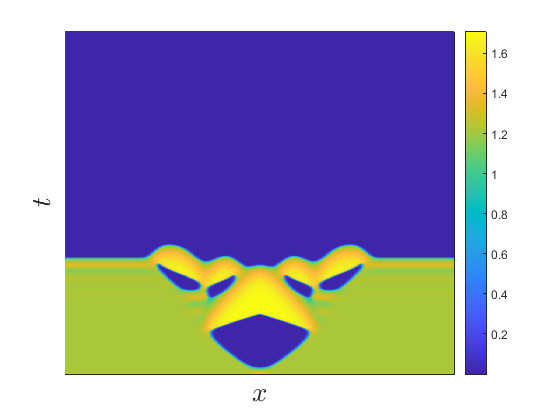}}
    
    \subfigure[$\varepsilon=1$]{\includegraphics[scale=0.25]{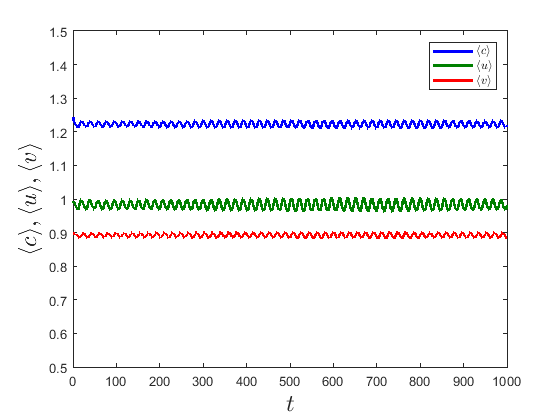}}
    \subfigure[$\varepsilon=0.5$]{\includegraphics[scale=0.25]{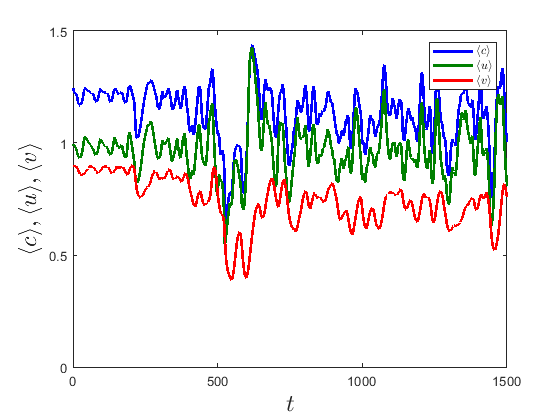}}
    \subfigure[$\varepsilon=0.25$]{\includegraphics[scale=0.25]{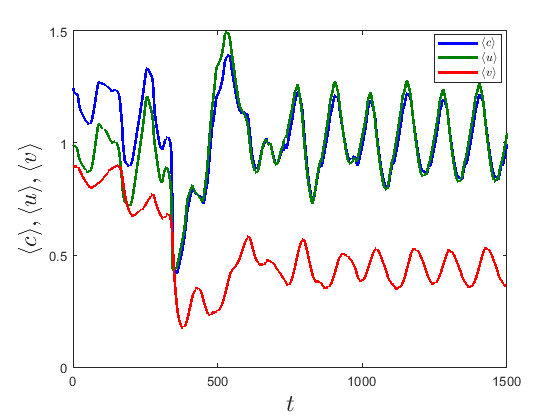}}
    \subfigure[$\varepsilon=0.2$]{\includegraphics[scale=0.25]{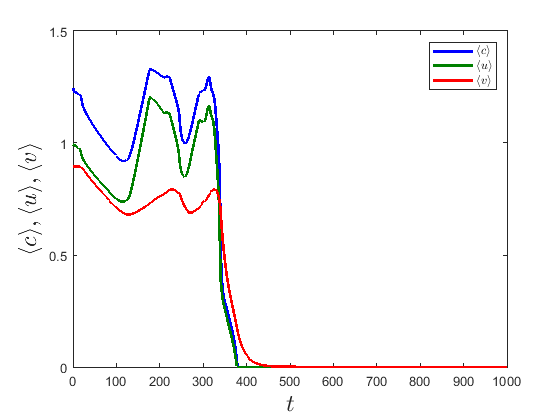}}
    \caption{(a,b,c,d) Transition of spatial distribution of oxygen and (e,f,g,h) spatial average density of oxygen, phytoplankton, and zooplankton for the parameter values \eqref{parameters} and $\mu_1=0.24,\,\mu_2=0.1575,\,D=1$ and different values of $\varepsilon.$ }
    \label{fig:pattern_bif_mu1_mu2}
\end{figure}

\section{Conclusion}\label{sec:conclus}

Over the last few decades, there has been growing evidence of a decline of the dissolved oxygen concentration in the ocean \cite{Breitburg18,Schmidtko17}. This has not only been recognized as a catastrophic threat to the marine ecosystems \cite{Wishner} but also as a potential threat to mankind \cite{Martin17} and to terrestrial ecosystems, as marine phytoplankton contributes about 70\% to the total atmospheric oxygen. Any significant decline in the global phytoplankton abundance and/or a decrease in the oxygen production rate in phytoplankton photosynthesis will inevitably lead to a decline in the global stock of the atmospheric oxygen \cite{Petrovskii17,Petrovskii21}. Thus, marine ecosystems, phytoplankton in particular, play a crucial role in maintaining the habitable Earth \cite{Sudakow22}. 

In spite of the apparent importance of the above issues, mathematical models addressing the change in the oxygen concentration as a component of the coupled phytoplankton-oxygen dynamics are rare in the literature. As one exception, a generic three-component oxygen-phyto-zooplankton model was developed in \cite{sekerci15a} (and further investigated in \cite{Petrovskii17,Sekerci18,Sekerci18b}). It has been shown that the formation of areas with a low oxygen concentration (i.e.~OMZs) is in fact an inherent property of the self-organised plankton-oxygen spatiotemporal dynamics, but it can be exacerbated by the effect of global warming, potentially leading to global anoxia. 

The model developed in \cite{sekerci15a,Petrovskii17}, however, missed several important features of the marine ecosystem's dynamics, hence making the prediction of emerging global anoxia somewhat questionable.  
In this paper, we have considered a nontrivial extension of the original model that includes factors such as zooplankton inherent competition, cannibalism and/or the effect of zooplankton's consumers from upper trophic layers, e.g.~fish. Another important factor is the existence of different timescales for phyto- and zooplankton growth, as the latter is usually much slower than the former. 

The properties of the extended model have been analysed in much detail using a combination of analytical and numerical tools. 
We first consider a non-spatial version of the model described by a system of three nonlinear ODEs (for oxygen, phytoplankton, and zooplankton, respectively) to study the variation in oxygen level and plankton densities over time. In addition to the results earlier obtained in \cite{sekerci15a,Sekerci15b}, we have shown that the number of coexisting steady states depends both on the linear mortality rate ($\mu_1$) and the rate of zooplankton intraspecific competition/consumption (quantified by coefficient $\mu_2$). For $\mu_1=0,$ there exists two feasible coexisting steady states where the lower oxygen level is always unstable, and the higher state changes its stability with increasing $\mu_2.$ Whereas for $\mu_1\ne0,$ we obtain a unique feasible steady state which changes its stability from unstable to stable for a strong intraspecific competition. Along with this, the extinction state is always stable. We also found that an increase in the rates of zooplankton linear mortality ($\mu_1$) and nonlinear mortality ($\mu_2$) leads to an increase in the oxygen abundance.

In order to better understand the relative importance of the linear and nonlinear mortality and their effect on the temporal dynamics of the system, we have considered three cases: (a)  $\mu_1\ne 0,\mu_2=0,$ (b) $\mu_1=0,\mu_2\ne0,$ and (c) $\mu_1\ne 0,\mu_2\ne 0$. Because of the complexity of the system, this has mostly been done through numerical simulations. For case (a), the unique steady state is stable for higher values of $\mu_1,$ and it loses its stability through supercritical Hopf bifurcation. Small stable cycles originate, increasing its size in a small interval of $\mu_2.$ Beyond that, the system cannot further withstand an increase in the amplitude of the cycle leading to complete collapse (see Fig.~\ref{fig:local_dynamics_temporal}a). However, for case (b), the Hopf bifurcation is subcritical, and the  system converges to stable steady state for higher values of $\mu_2.$ In this case, an unstable cycle is formed (see Fig.~\ref{fig:local_dynamics_temporal}b). Case (c) is a combination of the above two cases. Here, in a very narrow domain, the system exhibits tri-stability, with a stable steady state, a stable cycle, and the extinction state. \textcolor{black}{The two cycles appear through saddle-node bifurcation of limit cycle, and the disappearance of the unstable cycle is through subcritical Hopf, and that of the outer stable cycle is through heteroclinic bifurcation (see Fig.~\ref{fig:local_dynamics_temporal}c). }

Having analysed the effect of different timescales (cf.~``slow-fast system''), we obtained the critical manifold of the slow subsystem. The extremum (the fold point) of the critical manifold acts as an extinction threshold: if the system is pushed beyond the fold point (e.g.~by the choice of the initial conditions), the dynamics will eventually lead to plankton extinction and oxygen depletion, although the extinction/anoxia can be preceded by a long period of oscillations (cf.~Fig.~\ref{fig:trajectory_slow_fast}). Note that a decrease in the nonlinear mortality rate $\mu_2$ has a similar effect on the system's persistence. A decrease in $\mu_2$ below a certain critical value first destabilises the coexistence steady state resulting in oscillatory dynamics (see Fig.~\ref{fig:bifurcation_canard_explosion}). A further decrease (below another critical value) leads to the canard explosion. However, no relaxation oscillations emerge (see Theorem 4.1); instead, the system's trajectory goes to the origin, which obviously corresponds to the extinctions and anoxia. 

We then considered the spatially explicit system to study the distribution and spatiotemporal dynamics of oxygen and plankton. 
The spatially explicit model consists of three reaction-diffusion equations where the diffusion terms account for the effect of lateral turbulent mixing for the dissolved oxygen and phytoplankton and for the combined effect of turbulence and self-motion for zooplankton. Note that, because of the latter, zooplankton diffusivity can be expected to differ from that of phytoplankton and oxygen. 
Moreover, it can differ significantly. 
The interplay between the ordinary Fickian diffusion (in our case ``biodiffusion'' resulting from zooplankton random movement) and the turbulent mixing is known to be highly nonlinear \cite{Monin71,Monin75}. The ordinary diffusion, although itself often being orders of magnitude less intensive than the turbulent mixing, accelerates the turbulent diffusion significantly \cite{Monin71,Monin75}. In turn, for the diffusivity ratio being greater than one, the model can exhibit pattern formation due to the Turing instability; see Sections \ref{Sec:Turing_instability} and \ref{sec:OMZ-Turing}. 

Due to its mathematical complexity, the spatially explicit reaction-diffusion model is not analytically tractable. Therefore, we have investigated its properties through extensive numerical simulations, with a special attention to regimes that result in the formation of patterns containing areas with low oxygen level and/or regimes resulting in global oxygen depletion. Using the initial condition as a localised perturbation of the spatially uniform steady state, we have obtained that the system dynamics typically lead to the formation of strongly heterogeneous spatial distribution that includes one or several areas (patches) with a very low oxygen level, which we interpret as the formation of OMZ. Interestingly, the patterns emerge both in and outside of the Turing domain and hence, for different parameter values (e.g.~the diffusivity ratio being larger or smaller than one) can be attributed to different dynamical mechanisms, i.e.~Turing or non-Turing. Except for some rare cases (cf.~Fig.~\ref{fig:Turing_1}b), the OMZ formed at an early stage of the system dynamics fast spreads over the entire domain, often generating multiple patches, e.g.~see Figs.~\ref{fig:Turing_1}(a,c), \ref{fig:pattern_bif}(a,b,c) and
\ref{fig:pattern_bif_mu1_mu2}(b,c,d), the size and number of the emerging smaller OMZs varying with the parameter values.

The spread of the emerging pattern (a mixture of patches with high and low oxygen level) can lead to a different outcome. It can result in a self-sustained pattern, which, in the large time limit, can be stationary (cf.~Figs.~\ref{fig:Turing_1}c and \ref{fig:pattern_bif}a) or dynamic (Fig.~\ref{fig:pattern_bif}b). Alternatively, it may eventually lead to an unsustainable pattern -- a regime shift -- when, after a certain time, the oxygen concentration fast drops to very small values over the entire domain (cf.~Fig.~\ref{fig:Turing_1}a, \ref{fig:pattern_bif}c and \ref{fig:pattern_bif_mu1_mu2}d). Arguably, this may be interpreted as the onset of the global anoxia. 

Note that there is a subtle interplay between the zooplankton linear mortality rate $\mu_1$ and the difference in the timescales. In the special case of the same timescales ($\varepsilon=1$), the effect of a non-zero zooplankton linear mortality makes the system somewhat more sustainable: while an initial perturbation leads to the formation of deoxygenated patch (OMZ) at the center of the domain in case $\mu_1=0$ (Fig.~\ref{fig:Turing_1}b), it only leads to small fluctuations in the oxygen level in case $\mu_1>0$ (Fig.~\ref{fig:pattern_bif_mu1_mu2}a). 
However, for $\varepsilon<1$ and with a further increase in the timescale separation (i.e.~for a smaller $\varepsilon$), the effect of zooplankton mortality becomes rather opposite making the system less sustainable. For instance, for $\mu_1>0$ the global anoxia occurs already for $\varepsilon=0.2$ (see Fig.~\ref{fig:pattern_bif_mu1_mu2}d) but for $\mu_1=0$ the dynamics remains sustainable (i.e.~no global anoxia) for $\varepsilon=0.1$. 

For a fixed value of $\mu_1$, an increase in the timescale separation alone makes the dynamics less sustainable. A decrease in $\varepsilon$
tends to lead to a larger size of the initially formed OMZ; e.g.~see Fig.~\ref{fig:pattern_bif_mu1_mu2}(b,c,d), eventually resulting in global anoxia and extinctions when $\varepsilon$ becomes sufficiently small. That happens both for $\mu_1=0$ and $\mu_1>0$, cf.~Figs.~\ref{fig:pattern_bif}(c) and \ref{fig:pattern_bif_mu1_mu2}(d), although the succession of spatial patterns preceding the onset of anoxia is different between the two cases.

Apparently, our study leaves open questions. Firstly, recall that our model is conceptual; it only takes into account the interaction between oxygen and plankton but not with other components of the complicated marine food web. It has been shown in \cite{Petrovskii17} that, in case of a trophic chain, the effect of higher trophic levels only makes the regime shift -- the catastrophe of oxygen depletion and plankton extinction -- more likely as the three-component model \eqref{eq:temporal} provides an upper bound for a longer trophic chain. An open question however remains as to how the dynamics may change in case of a web rather than chain, for instance to account for effect of bacteria or detritus. 
Secondly, the description of turbulent mixing as the turbulent diffusion is somewhat simplistic; in particular, it completely disregards the fact that the turbulent mixing is multiscale and nonlocal\cite{Monin71,Monin75}. Although the model with the turbulent diffusion is arguably a sensible first step, a more advanced approach should involve a more realistic description of turbulence. 
These issues will become a focus of future work. 

\section*{Data Availability}

This paper has no data. 

\section*{Conflict of Interest}

The authors do not report any conflict of interest. 
\\

\newpage

\end{document}